\theoremstyle{plain}
\newtheorem{theorem}{Theorem}[section]
\newtheorem{lemma}[theorem]{Lemma}
\newtheorem{proposition}[theorem]{Proposition}
\newtheorem{corollary}[theorem]{Corollary}
\theoremstyle{definition}
\newtheorem{definition}[theorem]{Definition}
\newtheorem{remark}[theorem]{Remark}
\newtheorem{question}[theorem]{Question}
\newtheorem{example}[theorem]{Example}
\numberwithin{equation}{section}
\newcommand\des{\mathop{\rm des}}
\newcommand{\Des}{\operatorname{Des}}
\newcommand{\la}{{\lambda}}
\newcommand{\g}{{\gamma}}
\newcommand{\be}{{\beta}}
\newcommand\nneg{\mathop{ \rm neg}}
\def\ZZ{{\mathbb Z}}
\def\NN{{\mathbb N}}
\def\k+1th{(k+1)^{th}}
\DeclareMathOperator{\maj}{maj}
\DeclareMathOperator{\fmaj}{fmaj}
\DeclareMathOperator{\sgn}{sgn}
\def\maj{\mathop{\rm maj}\nolimits}
\def\inv{\mathop{\rm inv}\nolimits}
\def\des{\mathop{\rm des}\nolimits}
\def\il{\bigl]\kern-.55em\bigl]}
\def\ir{\bigr]\kern-.55em\bigr]}
\title{On some analogues of Carlitz's identity for the hyperoctahedral group}
\author{Riccardo Biagioli and Jiang Zeng}
\address{Universit\'e de Lyon, Universit\'e Lyon 1, Institut Camille
Jordan, UMR 5208 du CNRS, F-69622, Villeurbanne Cedex, France}
\email{biagioli@math.univ-lyon1.fr, zeng@math.univ-lyon1.fr}
\thanks{The two authors are supported by
the grant  ANR-08-BLAN-0243-03 and by the Program P2R Franco-Isra\'elien en Math\'ematiques}
\begin{document}
\begin{abstract}
We give a new description of the flag major index, introduced by Adin and Roichman, 
by using a major index defined by Reiner. This allows us to 
establish a connection between an identity  of Reiner and some more recent results due to Chow and Gessel.
Furthermore we generalize the main identity of Chow and Gessel  by computing the four-variate 
generating series of descents, major index, length, and number of negative entries over Coxeter groups of type $B$ and $D$.  
\end{abstract}
\maketitle

\section{Introduction}

The following identity due to Carlitz~\cite{ca75} provides a remarkable $q$-analogue of a well-known identity for the Eulerian polynomials
\begin{align}\label{eq:carlitz} 
\frac{\sum_{\sigma\in S_n} t^{{\rm des}(\sigma)}q^{{\rm maj}(\sigma)}}{\prod_{i=0}^{n}(1-tq^i)}=\sum_{k\geq0}(1+q+\cdots +q^k)^nt^k.
\end{align}
Here $S_n$ denotes the symmetric group of order $n$,  and  ``maj'' and ``des''  the number of descents and the major index, respectively. It is easy to see that Equation~\eqref{eq:carlitz} is equivalent to the following one
\begin{align}\label{eq:carlitzbis}
\sum_{n\geq 0}\frac{\sum_{\sigma\in S_n} t^{{\rm des}(\sigma)}q^{{\rm maj}(\sigma)}}{\prod_{i=0}^{n}(1-tq^i)}\frac{u^n}{n!}=\sum_{k\geq0}t^k\exp{(1+q+\cdots +q^k)u}. 
\end{align}
At the end of 1970's,
Gessel~\cite{ge-thesis},  and Garsia and Gessel~\cite{gg79} gave substantial extensions of \eqref{eq:carlitzbis}. 
Furthermore  Reiner \cite{re93, re93bis} generalized Garsia and Gessel's work to  the hyperoctahedral group, denoted  by  $B_n$. 
In particular, he obtained the following identity 
\begin{align}\label{eq:reiner1}
 \sum_{n\geq 0}\frac{\sum_{\be\in B_n}
t^{d_{R}(\be)}q^{\maj_R(\be)}p^{\inv_{R}(\be)}a^{\nneg(\be)}}{\prod_{i=0}^n(1-tq^i)}  \frac{u^n}{[\hat n]_{a,p}!}=
\sum_{k\geq 0}t^k{\hat e}[u]_{a,p}e[qu]_{p}\cdots e[q^ku]_{p},
\end{align}
from which  a $B_n$-analogue of \eqref{eq:carlitz} can be derived.
The definitions of the above  statistics as well as  all undefined notation will be given in the following sections. 

Motivated by their work on invariant algebras, Adin and Roichman introduced in~\cite{ar01} the flag major index ``$\fmaj$''. 
This statistic turned out to be the key ingredient of new $B_n$-analogues of Carlitz identity (see \cite[Problem  1.1]{abr03} on Foata's problem).
Two of such analogues were given by  Adin, Brenti, and Roichman~\cite[Theorem 4.2, Corollary 4.5]{abr03}. More recently, a third one was proposed by Chow and Gessel~\cite[Theorem 3.7]{cg07}, which in equivalent form reads as follows~\cite[Theorem 3.8]{cg07}
\begin{align}\label{eq:cg-carlitzbis}
\sum_{n\geq 0}\frac{\sum_{\beta \in B_n}t^{\des_B(\beta)}q^{\fmaj(\beta)}}{\prod_{i=0}^n(1-tq^{2i})}\frac{u^n}{n!}=\sum_{k\geq 0}t^k \exp(1+q+\cdots +q^{2k})u.
\end{align}

Our starting point is the observation that the
substitution $q\leftarrow q^2$, $a\leftarrow q^{-1}$, $p\leftarrow 1$, and $u\leftarrow (1+q^{-1})u$ in Reiner's identity \eqref{eq:reiner1} yields
 \begin{align}\label{eq:reiner2}
 \sum_{n\geq 0}\frac{ \sum_{\be\in B_n}t^{d_{R}(\be)}q^{2 \maj_R(\be)-\nneg(\beta)}}{\prod_{i=0}^n(1-tq^{2i}) }\frac{u^n}{n!}
=\sum_{k\geq 0}t^k\exp(1+q+\cdots + q^{2k})u.
\end{align}
This implies immediately that the two pairs of statistics $({\rm des_B}, {\rm fmaj})$ and $(d_{R}, 2 \maj_R -\nneg)$ are equidistributed over $B_n$. 
Actually our first result (cf. Proposition~\ref{p:connection}) shows that these two pairs of statistics are equal over $B_n$. Therefore Equations \eqref{eq:cg-carlitzbis}  and \eqref{eq:reiner2}  are the same. This prompted us to look for refinements of Chow-Gessel identity with more parameters.

In Theorem~\ref{t:main1}, we compute the generating function of the four statistics $(\des_B, \maj, \ell_{B}, \nneg)$ over $B_{n}$.
It turns out that this distribution is different from that of Reiner's four statistics in \eqref{eq:reiner1} over $B_{n}$.  
In contrast to the method of Chow-Gessel~\cite{cg07}, that uses $q$-difference calculus and recurrence relations,
we adopt a more combinatorial approach inspired by that of Garsia-Gessel~\cite{gg79}, and Reiner~\cite{re93}. 
The basic idea is to encode signed sequences by pairs made of a signed permutation and a partition.

Finally we consider the Coxeter group of type $D$, and by using the same encoding technique 
 for the hyperoctahedral group, we enumerate its elements  by descents, major index, length, and number of negative entries (cf. Theorem~\ref{identityTD}).

\section{Preliminaries and notation}

In this section we give some definitions, notation and results that
will be used in the rest of this work. 
Let $P$ be a statement: the characteristic function $\chi$ of $P$ is defined as $\chi(P)=1$ if $P$ is true, and $0$ otherwise.
For $n \in \NN$ we let $[n]:= \{ 1,2, \ldots , n \} $ (where $[0]:= \emptyset $). Given $n, m \in \ZZ, \; n \leq m$, we let $[n,m]:=\{n,n+1, \ldots, m \}.$  
The cardinality of a set $A$ will be denoted either by $|A|$ or by $\#A$.
For $n\in \NN$, we let
\begin{eqnarray*}
(a;p)_n:=\left\{\begin{array}{ll} \;\; 1, & {\rm  if} \ n=0;\\
(1-a)(1-ap)\cdots (1-ap^{n-1}),& {\rm if} \ n\geq 1.
 \end{array}\right.
\end{eqnarray*}
For our study we need notation for $p$-analogs of integers and factorials. These are defined by the following expressions
\begin{align*}
[n]_p:&=1+p+p^2+\ldots + p^{n-1}, \\
[n]_p!:&=[n]_p[n-1]_p\cdots[2]_p[1]_p, \\
\ [\hat{n}]_{a,p}!:&=(-ap;p)_n [n]_p! ,\\
\end{align*}
where $[0]_{p}!=1$.  
For $n=n_0+n_1+\cdots +n_k$  with $n_{0}, \ldots, n_{k}\geq 0$ we define the $p$-multinomial coefficient by 
$$
{n\brack n_0,n_1,\ldots, n_k}_{p}:=\frac{[ n]_{p}!}{[n_0]_{p}![n_1]_p!\cdots [n_k]_p!}.
$$
Finally, 
$$e[u]_p:=\sum_{n\geq 0}\frac{u^n}{[n]_p !}, \quad {\rm and}  \quad
\hat e[u]_{a,p}:=\sum_{n\geq 0}\frac{u^n}{[\hat n]_{a,p}!}$$
are the two classical $p$-analogues of the exponential function.


We denote by $B_{n}$ the group of all bijections $\be$ of the set
$[-n,n]\setminus \{0\}$ onto itself such that
\[\be(-i)=-\be(i)\]
for all $i \in [-n,n]\setminus \{0\}$, with composition as the
group operation. This group is usually known as the group of {\em
signed permutations} on $[n]$, or as the {\em hyperoctahedral
group} of rank $n$.  If
$\be \in B_{n}$ then we write $\be=[\be(1),\dots,\be(n)]$ and we call this the
{\em window} notation of $\be$. 
We denote by $|\beta(i)|$ the absolute value of $\beta(i)$, and by $\sgn(\be(i))$ its sign.
For $\be \in B_n$ we let
\begin{eqnarray*}
\inv(\be) & := & |\{ (i,j) \in [n]\times [n] \mid \be(i)>\be(j) \}|\quad {\rm and} \\ 
\nneg(\be) & := &   |\{i \in [n] \mid \be(i)<0\}|.
\end{eqnarray*}

As set of generators for $B_n$ we
take $S_B:=\{s_1^B,\ldots,s_{n-1}^B,s_0^B\}$ where for $i
\in[n-1]$
\[s_i^B:=[1,\ldots,i-1,i+1,i,i+2,\ldots,n] \;\; {\rm and} \;\; s_0^B:=[-1,2,\ldots,n].\]
It is well-known that $(B_n,S_B)$ is a Coxeter system of type $B$ (see e.g., \cite[\S 8.1]{bb05}). The following characterization of the {\em length} function $\ell_B$ of $B_n$ with respect to $S_B$ is well-known~\cite[Proposition 8.1.1]{bb05}
$$
\ell_B(\be)=\inv(\be)-\sum_{\be(i)<0}\be(i).
$$
\begin{remark}
Note that the length statistic denoted $\inv_{R}$ in \eqref{eq:reiner1}
is  different from $\ell_{B}$, since it is computed with respect to a different set of generators, see \cite[\S 2]{re93} for its definition. However $\inv_{R}$ and $\ell_{B}$ are equidistributed over $B_{n}$. A refinement of this fact will be given after Equation~\eqref{e:tridistribution}.
\end{remark}

The {\em B-descent set} of $\be \in B_n$ is defined by 
\begin{eqnarray*}
\Des_B(\be)&:=&\{i \in [0,n-1] \mid \be(i) > \be(i+1)\},
\end{eqnarray*}
where  $\be(0):=0$, and its cardinality is denoted by $\des_B(\be)$. The inversion number $\inv$ and $\Des_B$ are computed by using the natural order 
$$ -n<\ldots <-1<0<1< \ldots < n $$ on the set $[-n,n]$. 
As usual the {\em major index}  is defined  to be the sum of descents
$$ \maj(\be)=\sum_{i\in \Des_B(\be)}i,$$
and the \emph{flag-major index}  \cite[(10)]{abr03} by $$\fmaj(\be):=2 \maj(\be)+ \nneg(\be).$$
We should point out that in the first paper \cite{ar01} Adin and Roichman defined the flag-major index by using a different order. Here we use their second definition which appears in the subsequent paper with Brenti \cite{abr03}, and that uses the natural order.
 
\smallskip
Reiner introduced the definition of descents in the usual geometric way  \cite[\S 2]{re93}. To give a combinatorial description of his definition 
 it is convenient to use the following order $<_{R}$
$$ 1<\ldots <n<_{ R}-n<\ldots <-1 $$ 
on the set $[-n,n]\setminus \{0\}$. By abuse of notation, we write $<$ instead of $<_{R}$ when they coincide.
Then Reiner's descent set reads  as follows
$$\Des_R(\be):=\{i \in [1,n] \mid \be(i) >_R \be(i+1)\},$$
where $\be(n+1):=n$. The cardinality of $\Des_R(\be)$ will be denoted by $d_{R}(\be)$.
To avoid confusion we denote  the major index associated to Reiner's descent set by $\maj_R$, i.e.,
$$\maj_R(\be)=\sum_{i\in \Des_R(\be)}i. $$

\begin{remark}\label{r:order}
The main difference between the two descent sets is that $0$ can be in $\Des_B$ but not in $\Des_R$,  while $n$ can be in 
$\Des_R$ but not in  $\Des_B$.
More precisely, $0\in \Des_B(\be)$ if and only if $\be(1)<0$, while $n \in \Des_R(\be)$ if and only if $\be(n)\in \{-1, -2,\ldots, -n\}$. Clearly, this reflects on the corresponding major indices. 
\end{remark}

\noindent  For example, if $\be=[-3,-4,1,6,-5,-2] \in B_6$ then 
$$\Des_B(\be)=\{0,1,4\}, \quad {\rm and} \quad \Des_R(\be)=\{1,2,6\}.$$
Hence $\maj(\be)=5$, while $\maj_R(\be)=9$.
\bigskip

We denote by $D_{n}$ the subgroup of $B_{n}$ consisting of all the
signed permutations having an even number of negative entries in
their window notation, more precisely
\[ D_{n} := \{\g \in B_{n} \mid \nneg(\g)\equiv 0 \; ({\rm mod} \; 2 ) \}. \]
It is usually called the {\em even-signed permutation group}. As a set of generators for $D_n$ we take
$S_D:=\{s_{0}^D,s_{1}^D,\dots,s_{n-1}^D\}$ where for $i \in [n-1]$
\[s_i^D:=s_i^B \;\; {\rm and} \;\; s_{0}^D:=[-2,-1,3,\ldots,n].\]

The following is a well-known combinatorial way to compute the length, and the descent set of $\g \in D_{n}$, (see, e.g., \cite[\S 8.2]{bb05})
\begin{eqnarray*}\label{lD} 
\ell_D(\g)&=&\ell_B(\g) - \nneg(\g),\; \; {\rm and} \\
\Des_D(\g)&=&\{i \in [0,n-1] \mid \g(i)>\g(i+1)\},
\end{eqnarray*}
where $\g(0):=-\g(2)$. The cardinality  of $\Des_D(\g)$ will be denoted by $\des_D(\g)$. 
\section{Connection between Reiner and Chow-Gessel's identities}

In the introduction we deduced that the two pairs of statistics $(\des_B,\fmaj)$ and $(d_{R}, 2 \maj- \nneg)$ are
equidistributed over $B_{n}$. In this section we show  that they are in fact identical.

\begin{proposition}\label{p:connection}
For any $\be \in B_n$ we have 
\begin{align}
d_{R}(\be)&=\des_B(\be),\label{eq:des}\\
\maj_{R}(\be)&= \maj(\be)+\nneg(\be).\label{eq:maj}
\end{align}
\end{proposition}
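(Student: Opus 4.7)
The plan is to compare $\Des_B(\be)$ and $\Des_R(\be)$ directly, position by position, exploiting the fact that the orders $<$ and $<_R$ on $[-n,n]\setminus\{0\}$ agree on every pair of entries of the same sign and disagree only on pairs of opposite signs: in $<_R$ a positive entry is smaller than a negative one, while the reverse holds in $<$.

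For each interior position $i \in [1,n-1]$, I would first observe that if $\sgn(\be(i))=\sgn(\be(i+1))$, then $i$ either lies in both of $\Des_B(\be)$ and $\Des_R(\be)$ or in neither. When the signs differ, $i$ belongs to $\Des_B(\be)\setminus\Des_R(\be)$ precisely for a transition $\be(i)>0>\be(i+1)$, and to $\Des_R(\be)\setminus\Des_B(\be)$ precisely for a transition $\be(i)<0<\be(i+1)$. Combined with the boundary cases recalled in Remark~\ref{r:order} (namely $0\in\Des_B(\be)\Leftrightarrow\be(1)<0$ and $n\in\Des_R(\be)\Leftrightarrow\be(n)<0$), this naturally suggests organising the symmetric difference $\Des_B(\be)\triangle\Des_R(\be)$ by the maximal negative runs in the window notation of $\be$.

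Writing those runs as intervals $[a_1,b_1],\dots,[a_K,b_K]$, my next step is to check that each run $[a_k,b_k]$ contributes exactly one position to $\Des_B(\be)\setminus\Des_R(\be)$, namely its \emph{start} $a_k-1$ (understood as $0$ when $a_k=1$), and exactly one position to $\Des_R(\be)\setminus\Des_B(\be)$, namely its \emph{end} $b_k$ (understood as $n$ when $b_k=n$). This gives $|\Des_B(\be)\setminus\Des_R(\be)|=|\Des_R(\be)\setminus\Des_B(\be)|=K$ and hence \eqref{eq:des} at once.

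For \eqref{eq:maj}, the contribution of the $k$-th run to $\maj_R(\be)-\maj(\be)$ is exactly ``end minus start'', and in each of the four boundary cases ($a_k=1$ or not, $b_k=n$ or not) a one-line check confirms that this difference equals the size $b_k-a_k+1$ of the run. Summing over $k$ then yields $\sum_k(b_k-a_k+1)=\nneg(\be)$, which is the desired identity. The only bookkeeping to be careful about is this final case analysis at the two ends of the window; since each sub-case reduces to a trivial arithmetic check, I expect no genuine obstacle.
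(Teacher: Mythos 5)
Your proposal is correct, and it takes a genuinely different route from the paper. The paper reduces to signed permutations with a single descent (four explicit cases, depending on the signs of $\be(i)$, $\be(i+1)$ and whether the descent is at $n$) and then appeals to an induction on the number of descents that is left implicit. You instead compute the symmetric difference $\Des_B(\be)\triangle\Des_R(\be)$ globally: the two orders agree on same-sign pairs, so the symmetric difference consists exactly of the starts and ends of the maximal negative runs $[a_1,b_1],\dots,[a_K,b_K]$ (with the boundary conventions $\be(0)=0$ and $\be(n+1)=n$ handling $a_k=1$ and $b_k=n$ uniformly, since $0\notin\Des_R$ and $n\notin\Des_B$ by definition). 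Each run puts $a_k-1$ into $\Des_B\setminus\Des_R$ and $b_k$ into $\Des_R\setminus\Des_B$, giving \eqref{eq:des} by counting, and \eqref{eq:maj} by the telescoping sum $\maj_R(\be)-\maj(\be)=\sum_k\bigl(b_k-(a_k-1)\bigr)=\sum_k(b_k-a_k+1)=\nneg(\be)$. What your version buys is a uniform argument for arbitrary $\be$ with no induction and a transparent explanation of why the correction term is exactly $\nneg(\be)$ (it is the total length of the negative runs); what the paper's version buys is a very concrete, checkable list of low-complexity cases. The only point worth spelling out when writing yours up in full is the observation that the two orders restrict to the same order on the negative entries (namely $-n<\dots<-1$ in both), which is what makes the "same sign implies same descent status" step valid.
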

\begin{proof}
We first prove that $d_{R}(\be)=\des_B(\be)$ for every $\be \in B_n$. 
Clearly it suffices to  restrict our attention to the case of one descent. 
\begin{itemize}
\item[a)] If $i \in \Des_R(\be)$ is such that $\be(i)>\be(i+1)>0$ or $0>\be(i)>\be(i+1)$, then clearly $i\neq n$ and $i \in \Des_B(\be)$.
\item[b)] If $i \in \Des_R(\be)$ is such that $\be(i)<0$ and $\be(i+1)>0$, then there exists a $k$ such that 
$$
0\leq k \leq i-1, \quad 
0<\be(1)<\ldots<\be(k)\quad\textrm{and}\quad \be(k+1)<\ldots<\be(i)<0.
$$
If $k\geq 1$ then $k \in \Des_B(\be)$; if $k=0$ then  $\be(1)<0$ and so  
$0 \in \Des_B(\be)$. 
\end{itemize}

Now we consider  the major indices. It suffices to check the situation of 
one descent, since the general case follows easily by induction. In the first following three cases we let $i \in \Des_R(\be)$
be the only descent of $\be$ in $[1,n-1]$.
\begin{itemize}
\item[1)] Suppose that $\be(i)>\be(i+1)>0$. 
Then $0<\be(1)<\ldots<\be(i)$, and there are $h$ negative entries $(0 \leq h\leq n-i-1)$ such that
$$0<\be(i+1)<\ldots <\be(n-h) \quad {\rm and} \quad \be(n-h+1)<\ldots<\be(n)<0.$$
If $h=0$, then $\maj_R (\be) - \nneg(\be)=i=\maj(\be)$. If $h > 0$, then $\Des_R(\be)=\{i,n\}$ while $\Des_B(\be)=\{i,n-h\}$. Thus $ \maj_R (\be) - \nneg(\be)=(i+n)-h=\maj(\be)$.
\smallskip

\item[2)] Suppose that $0>\be(i)>\be(i+1)$. Then $\be(i+1)<\ldots<\be(n)<0$, and there are $k$ positive entries ($0\leq k\leq i-1$) such that
$$ 0<\be(1)<\ldots <\be(k)  \quad {\rm and} \quad \be(k+1)<\ldots<\be(i)<0.$$
In this case $\maj_R (\be)-\nneg(\be)=(i+n)-(n-k)=k+i=\maj(\be)$.
\smallskip

\item[3)] Suppose that $\be(i)<0$ and $\be(i+1)>0$. Then there exist $0\leq k\leq i-1$ and $0 \leq h\leq n-i-2$ such that
$$ 0<\be(1)<\ldots <\be(k)  \quad {\rm and} \quad\be(k+1)<\ldots<\be(i)<0, \quad {\rm and}$$
$$ 0<\be(i+1)<\ldots <\be(n-h)  \quad {\rm and} \quad \be(n-h+1)<\ldots<\be(n)<0.$$
If $h, k>0$, we obtain $\maj_R(\be)-\nneg(\be)=(i+n)-(h+i-k)=k+(n-h)=\maj(\be)$. The cases $h=0$ and $k=0$ are similar.
\smallskip

\item[4)] If $n \in \Des_R(\be)$ is the only descent then there exists a  $k$ $(0\leq k\leq n-1)$ such that 
$$
 0<\be(1)<\ldots <\be(k) \quad {\rm and} \quad \be(k+1)<\ldots<\be(n)<0.
$$
Hence $\maj_R(\be)-\nneg(\be)=n-(n-k)=k=\maj(\be)$.
\end{itemize}
\end{proof}
\begin{remark}
The first equality $d_{R}(\be)=\des_B(\be)$ can also be easily derived by using its  geometric interpretation.
\end{remark}

The next new description of the flag major index easily follows from Proposition~\ref{p:connection}.
\begin{corollary} \label{c:fmaj} 
For every $\be \in B_n$  we have 
$$\fmaj(\be)=2\maj_R(\be)-\nneg(\be).$$
\end{corollary}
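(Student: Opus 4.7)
The plan is to derive the corollary as a one-line algebraic consequence of Proposition~\ref{p:connection}, specifically the identity \eqref{eq:maj}. There is really no combinatorial work left to do, since the heavy lifting---relating Reiner's major index to the pair $(\maj,\nneg)$ case-by-case according to the sign pattern around each descent---has already been carried out in the proof of the proposition.

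First I would recall the definition of the flag-major index from the preliminaries, namely $\fmaj(\be) = 2\maj(\be) + \nneg(\be)$. Then I would invoke Equation~\eqref{eq:maj}, which gives $\maj(\be) = \maj_R(\be) - \nneg(\be)$. Substituting this into the definition yields
\[
\fmaj(\be) = 2\bigl(\maj_R(\be) - \nneg(\be)\bigr) + \nneg(\be) = 2\maj_R(\be) - \nneg(\be),
\]
which is the claimed identity.

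Since the corollary is literally a two-term algebraic manipulation of \eqref{eq:maj} together with the definition of $\fmaj$, there is no real obstacle to overcome; the only thing to be careful about is to cite Proposition~\ref{p:connection} correctly and to remind the reader of the definition $\fmaj(\be) := 2\maj(\be) + \nneg(\be)$ given in Section~2, so that the reader can verify the substitution without flipping pages. If desired, one could also point out that this new description makes transparent why $(\des_B,\fmaj)$ and $(d_R, 2\maj_R - \nneg)$ are not merely equidistributed but actually equal as pairs of statistics on $B_n$, closing the loop with the observation made in the introduction about the relationship between \eqref{eq:cg-carlitzbis} and \eqref{eq:reiner2}.
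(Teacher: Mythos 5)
Your proposal is correct and is precisely the argument the paper intends: the corollary is stated as following "easily" from Proposition~\ref{p:connection}, and your substitution of $\maj(\be)=\maj_R(\be)-\nneg(\be)$ into the definition $\fmaj(\be)=2\maj(\be)+\nneg(\be)$ is exactly that one-line deduction.
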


\section{Encoding signed sequences }\label{bijection1}
In this section we introduce a  procedure encoding signed sequences by signed permutations and partitions, which will be 
used in the following  sections.
The basic idea can be found in Garsia and Gessel~\cite{gg79} and Reiner~\cite{re93}.

\medskip
Let ${\mathcal P}_n$ be the set of 
non decreasing sequences  of  nonnegative integers $(\lambda_1, \lambda_2, \ldots, \lambda_n)$,
 i.e., partitions  of length less than or equal to $n$.
 
\begin{definition}\label{d:bijection}
Given a sequence $f=(f_1,\ldots,f_n)\in \ZZ^n$ we define
a pair $(\beta,\lambda)\in B_n\times {\mathcal P}_n$, 
where:
\begin{itemize}
\item[a)] $\be$ is the unique signed permutation  satisfying 
the following three conditions:
\begin{enumerate}
\item[1)] $|f_{|\be(1)|}|\leq |f_{|\be(2)|}|\leq \ldots \leq |f_{|\be(n)|}|$,
\item[2)] $\sgn(\be(i)):=\sgn(f_{|\be(i)|})$,
\item[3)] If  $|f_{|\be(i)|}|=|f_{|\be(i+1)|}|$, then $\be(i)<\be(i+1)$;
\end{enumerate}
\item[b)]
$\lambda=(\la_1,\ldots,\la_n)$ is the partition with
$$\lambda_i:=|f_{|\be(i)|}|-|\{j\in {\Des}_B(\be) \mid j\leq i-1\}|\quad\text{for} \quad
1\leq i\leq n. $$
\end{itemize}
We denote $\pi(f):=\be$ and $\lambda(f):=\lambda$.
\end{definition}

\begin{remark} 
The above sequence $\la$ is clearly a partition since for all $i\in [n]$, $\lambda_i\geq 0$ and 
$$
\lambda_{i+1}-\lambda_i=
|f_{|\beta(i+1)|}|-|f_{|\be(i)|}|-\chi(i\in \Des_B(\pi))\geq 0 \quad \textrm{for $i\in [n-1]$}.
$$
Moreover, note that  if $i \in {\Des}_B(\be)$ then $|f_{|\be(i)|}|<|f_{|\be(i+1)|}|$, and 
$0\in {\Des}_B(\be)$ if and only if $f_{|\be(1)|}<0$.
\end{remark}

We introduce the following statistics on the set of signed sequences $\ZZ^n$.
\begin{definition}\label{d:statistics}
For $f=(f_1,\ldots,f_n) \in \ZZ^n$ we let
\begin{align*}
\max(f)&:=\max\{|f_i|\}, &  |f|&:=\sum_{i=1}^n |f_i|,\\
\ell_B(f)&:=\ell_B(\pi(f)),& \nneg(f)&:=|\{i \in [n] \mid f_i<0\}|.
\end{align*}
\end{definition}

\begin{proposition} \label{p:psi}
The map $\psi : \ZZ^n \rightarrow B_n\times {\mathcal P}_n$ defined by (see Definition~\ref{d:bijection})
$$f  \mapsto (\pi(f), \lambda(f))$$ 
is a bijection. Moreover, if we let $\be:=\pi(f)$ and $\la:=\la(f)$ then
\begin{align}
\max(f)&=\max(\lambda)+\des_B(\be),\label{eq:1}\\
|f|&=|\lambda|+n\des_B(\be) -\maj(\be).\label{eq:2}
\end{align}
\end{proposition}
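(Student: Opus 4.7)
The plan is to exhibit an explicit inverse to $\psi$, check that both compositions are the identity, and then read off \eqref{eq:1} and \eqref{eq:2} directly from the defining formula for $\la(f)$.

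For the inverse, given $(\be,\la)\in B_n\times\mathcal P_n$ I would define $f\in\ZZ^n$ by setting, for $1\leq i\leq n$,
$$f_{|\be(i)|}\;:=\;\sgn(\be(i))\cdot\bigl(\la_i+d_i(\be)\bigr),\qquad d_i(\be):=|\{j\in\Des_B(\be):j\leq i-1\}|.$$
Since $i\mapsto|\be(i)|$ is a bijection on $[n]$, this determines every entry of $f$. That $\psi^{-1}\circ\psi=\mathrm{id}$ is then immediate from Definition~\ref{d:bijection}(b) together with condition (2) there. For $\psi\circ\psi^{-1}=\mathrm{id}$, I need to verify that applying Definition~\ref{d:bijection} to the $f$ above returns the same pair $(\be,\la)$.

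The verification reduces to checking conditions (1), (2), (3) of Definition~\ref{d:bijection}(a), because once these hold the partition formula in (b) is automatic from the construction. By construction,
$$|f_{|\be(i+1)|}|-|f_{|\be(i)|}|=(\la_{i+1}-\la_i)+\chi(i\in\Des_B(\be)),$$
and the right-hand side is non-negative since $\la$ is a partition, giving (1). Equality forces $i\notin\Des_B(\be)$, hence $\be(i)<\be(i+1)$, giving (3). Condition (2) holds by construction whenever $\la_i+d_i(\be)\neq 0$. The one subtle case is $\la_i+d_i(\be)=0$: then both summands vanish, so there are no descents in $[0,i-1]$, forcing $0<\be(1)<\cdots<\be(i)$ (recall $\be(0):=0$). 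In particular $\be(i)>0$, so condition (2) remains valid under the natural convention $\sgn(0)=+1$. This interaction between zero entries of $f$ and the tie-breaking rule is the only, mild, obstacle in the verification.

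Finally, \eqref{eq:1} and \eqref{eq:2} follow directly from the identity $|f_{|\be(i)|}|=\la_i+d_i(\be)$. Since this sequence is non-decreasing in $i$ by condition (1) and $\la$ is a partition,
$$\max(f)=|f_{|\be(n)|}|=\la_n+d_n(\be)=\max(\la)+\des_B(\be),$$
using $d_n(\be)=\des_B(\be)$. For \eqref{eq:2}, summing the formula and swapping the order of summation,
$$|f|=\sum_{i=1}^n|f_{|\be(i)|}|=|\la|+\sum_{i=1}^n d_i(\be)=|\la|+\sum_{j\in\Des_B(\be)}(n-j)=|\la|+n\des_B(\be)-\maj(\be),$$
as required.
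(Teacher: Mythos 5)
Your proof is correct and follows essentially the same route as the paper: the same explicit inverse (your $\la_i+d_i(\be)$ is the paper's $\mu_i$), and the same identity $\sum_{i=1}^n d_i(\be)=\sum_{j\in\Des_B(\be)}(n-j)=n\des_B(\be)-\maj(\be)$ for \eqref{eq:1} and \eqref{eq:2}. You are in fact more careful than the paper, which dismisses the verification that $\psi\circ\psi^{-1}=\mathrm{id}$ as easy; your observation that $\la_i+d_i(\be)=0$ forces $\be(i)>0$, so the convention $\sgn(0)=+1$ is consistent with the tie-breaking rule, is a worthwhile point the paper leaves implicit.
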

\begin{proof}
To see that $\psi$ is a bijection we construct its inverse as follows. To each $(\be, \lambda)\in B_n\times {\mathcal P}_n$ associate the partition $\mu=(\mu_1,\ldots,\mu_n)$ where
$$\mu_i:= \lambda_i + |\{j \in {\Des}_B(\be) \mid j\leq i-1\}|,$$
and define the sequence $f=(f_1,\ldots, f_n) \in \ZZ^n$  by letting
$$
f_i:=\sgn(\be^{-1}(i))\, \mu_{|\be^{-1}(i)|}  \quad \textrm{for} \quad 1\leq i \leq n.
$$
It is easy to see that $\psi(f)=(\be, \lambda)$. Equations \eqref{eq:1} and \eqref{eq:2}
follow since $\la$ is a partition, and from
$$ 
\sum_{i=1}^n|\{j\in {\Des}_B(\be)|j\leq i-1\}|=\sum_{j\in {\Des}_B(\pi)}(n-j)=n\des_B(\be)-\maj(\be).
$$
\end{proof} 

\begin{example}
If $f=(-4,4,1,-3,6,3,-4)$ then 
$(|f_{|\be(1)|}|, \ldots, |f_{|\be(7)|}|)=(1,3,3,4,4,4,6),$ so 
$$\pi(f):=\be=(3,-4,6, -7, -1,2,5) \in B_7.$$
Moreover $\Des_B(\be)=\{1,3\}$, so $\lambda=(1,2,2,2,2,2,4)$.
We obtain $\max(f)=6$ and $\des_B(\be)=2$. Hence the identity \eqref{eq:2} reads $25=15+14-4$. 
Conversely, given the pair 
$$([5,-3,1,2,-4],(0,2,2,3,3))\in B_5\times {\mathcal P}_5,
$$
 we find
 $\mu=(0,3,3,4,5)$ and $f=(3,4,-3,-5,0)$.
\end{example}
\section{Main identity}
In this section we compute the generating function of the vector statistic 
$(\des_B, \maj, \ell_{B}, \nneg)$ over $B_{n}$.
 As special instances, we recover 
several  known identities listed after the proof. 
\begin{theorem}\label{t:main1}
 We have
\begin{align}\label{eq:main1}
 \sum_{n\geq 0}\frac{u^n}{(t;q)_{n+1}[\hat n]_{a,p}!}\sum_{\be\in B_n}
t^{\des_B(\be)}q^{\maj(\be)}p^{\ell_B(\be)}a^{\nneg(\be)}=
\sum_{k\geq 0}t^k\prod_{j=0}^{k-1}e[q^ju]_{p}\cdot {\hat e}[q^ku]_{a,p}.
\end{align}
\end{theorem}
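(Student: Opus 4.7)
My plan is to adapt the combinatorial strategy of Garsia--Gessel and Reiner, using the bijection $\psi$ of Proposition~\ref{p:psi} as the bridge between signed sequences and the statistics of $B_n$. For each $n\ge 0$ I would introduce
\[\Sigma_n \;:=\; \sum_{k\ge 0} t^k \sum_{f\in[-k,k]^n} q^{nk-|f|}\,p^{\ell_B(\pi(f))}\,a^{\nneg(f)},\]
and evaluate it in two different ways.

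For the first evaluation, I would rewrite $\Sigma_n$ as a sum over pairs $(f,c)$ with $f\in\mathbb{Z}^n$ and $c:=k-\max(f)\ge 0$. Applying $\psi:f\mapsto(\beta,\lambda)\in B_n\times\mathcal{P}_n$ and using identities \eqref{eq:1}--\eqref{eq:2}, together with the evident equalities $\ell_B(\pi(f))=\ell_B(f)$ and $\nneg(f)=\nneg(\pi(f))$, the sum factorizes as
\[\Sigma_n \;=\; \frac{1}{1-tq^n}\cdot\Big(\sum_{\beta\in B_n} t^{\des_B(\beta)}q^{\maj(\beta)}p^{\ell_B(\beta)}a^{\nneg(\beta)}\Big)\cdot\sum_{\lambda\in\mathcal{P}_n} t^{\max(\lambda)}q^{n\max(\lambda)-|\lambda|}.\]
A short partition conjugation (replacing $\lambda$ by $\mu_i:=\max(\lambda)-\lambda_{n+1-i}$, which lands in the subset of $\mathcal{P}_n$ defined by $\mu_1=0$) evaluates the last sum to $1/(t;q)_n$, and hence $\Sigma_n$ equals $1/(t;q)_{n+1}$ times the $B_n$-sum. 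Consequently $\sum_{n\ge 0}\Sigma_n\,u^n/[\hat n]_{a,p}!$ is exactly the LHS of \eqref{eq:main1}.

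For the second evaluation, I would stratify the inner sum of $\Sigma_n$ by the level vector $\vec m=(m_0,\dots,m_k)$ defined by $m_j:=|\{i:|f_i|=j\}|$. The combinatorial heart of the argument is the level-refined factorization
\begin{equation}\label{eq:plan-star}
\sum_{\substack{f\in[-k,k]^n\\ |\{i:|f_i|=j\}|=m_j}} p^{\ell_B(\pi(f))}\,a^{\nneg(f)} \;=\; {n\brack m_0,m_1,\dots,m_k}_p\,\prod_{i=m_0+1}^{n}(1+ap^i),
\end{equation}
a refinement of the classical identity $[\hat n]_{a,p}!=[n]_p!\,(-ap;p)_n$. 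Granting \eqref{eq:plan-star}, substituting $[\hat n]_{a,p}!=[n]_p!\,(-ap;p)_n$ and applying the reindexing $n_j:=m_{k-j}$ (which converts $nk-|f|$ into $\sum_j j\,n_j$) identifies, for each $k\ge 0$,
\[\sum_{n\ge 0}\frac{u^n}{[\hat n]_{a,p}!}\sum_{f\in[-k,k]^n} q^{nk-|f|}\,p^{\ell_B(\pi(f))}\,a^{\nneg(f)} \;=\; \prod_{j=0}^{k-1}e[q^j u]_p\cdot\hat e[q^k u]_{a,p},\]
and multiplying by $t^k$ and summing over $k$ yields the RHS of \eqref{eq:main1}.

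The main obstacle will be the identity \eqref{eq:plan-star}. Because flipping a single sign in $f$ alters $\inv(\pi(f))$ by an amount that depends on all the other entries, the right-hand side of \eqref{eq:plan-star} does not hold shape-by-shape; the factorization only emerges after summing over every sequence compatible with $\vec m$. I would prove \eqref{eq:plan-star} by induction on $n$, modelled on the classical insertion proof of $[\hat n]_{a,p}!=[n]_p!\,(-ap;p)_n$: one removes from $\pi(f)$ the entry of largest absolute value $n$, computes separately the insertion cost of $+n$ and of $-n$ at each sorted position, and checks that summing over these positions and over the two signs produces exactly the factor relating the two sides of \eqref{eq:plan-star} for $(n-1,\vec m-e_j)$ and for $(n,\vec m)$, where $j$ is the level of the removed entry.
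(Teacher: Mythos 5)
Your proposal is correct and follows essentially the same route as the paper: both compute $\sum_{f}t^{\max(f)}q^{n\max(f)-|f|}p^{\ell_B(f)}a^{\nneg(f)}$ in two ways, using the bijection $\psi$ with \eqref{eq:1}--\eqref{eq:2} on one side and the level-vector factorization (your identity is exactly \eqref{lemmaReiner}, since $\tfrac{(-ap;p)_n}{(-ap;p)_{n_0}}=\prod_{i=n_0+1}^{n}(1+ap^i)$) on the other. The only divergences are cosmetic: you evaluate the partition sum $\sum_{\lambda}t^{\max(\lambda)}q^{n\max(\lambda)-|\lambda|}=1/(t;q)_n$ directly by complementation where the paper substitutes $q\mapsto q^{-1}$, $t\mapsto tq^n$ in \eqref{seq}, and you propose an insertion-induction for the key lemma where the paper cites Reiner.
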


\begin{proof}
The proof consists in computing in two different ways the series 
$$\sum_{f\in \ZZ^n} t^{\max(f)}q^{\max(f)\cdot n-|f|}
p^{\ell_B(f)}a^{\nneg(f)}.$$

We need the following preliminary result that can be proved as in  \cite[Lemma 3.1]{re93}.  
For $\underline{n}=(n_0,n_1,\ldots,n_k)\in \NN^{k+1}$ a composition of $n$, i.e., 
$n=n_0+\ldots+n_k$, we let
$$
\ZZ^n(\underline{n}):=\{
{f \in \ZZ^n \mid \#\{i:|f_i|=j\}=n_j}\}.
$$ 
Then
\begin{align}\label{lemmaReiner}
\sum_{f \in \ZZ^n(\underline{n})} 
p^{\ell_B(f)}a^{\nneg(f)}={n \brack n_0, n_1, \ldots, n_k}_{p}\frac{(-ap;p)_{n}}{(-ap;p)_{n_{0}}}.
\end{align}
A version of \eqref{lemmaReiner}
for the more general case of wreath products is given in \cite[Lemma 4.2]{bz}.

We can now start our computation.
\begin{align*}
\sum_{f\in \ZZ^n \mid \max(f)\leq k} q^{k\cdot n-|f|}
p^{\ell_B(f)}a^{\nneg(f)}
&=\sum_{n_0+\cdots+n_k=n}q^{k \sum_i n_i -\sum_i i n_i}
\sum_{f\in \ZZ^n(\underline{n})} 
p^{\ell_B(f)}a^{\nneg(f)}\\
&=\sum_{n_0+\cdots+n_k=n}{n \brack n_0, n_1, \ldots, n_k}_{p}\frac{(-ap;p)_{n}}{(-ap;p)_{n_{0}}} q^{\sum_i n_i(k-i)}\\
&=[\hat n]_{a,p}!\times \textrm{coefficient of} \; u^n \; \textrm{in}\;
e[u]_pe[qu]_{p}\ldots e[q^{k-1}u]_{p} {\hat e}[q^ku]_{a,p}.
\end{align*}
Using the formula
\begin{equation}\label{tmax}
(1-t)\sum_{k\geq 0}a_{\leq k}t^k=\sum_{k\geq 0}(a_{\leq k}-a_{\leq k-1})t^k=
\sum_{k\geq 0} a_{k}t^k,
\end{equation}
where $a_{\leq k}=a_0+\cdots +a_k$,
we derive immediately
$$
\sum_{f\in \ZZ^n}t^{\max(f)}q^{\max(f)\cdot n-|f|}
p^{\ell_B(f)}a^{\nneg(f)}=(1-tq^n)\sum_{k\geq 0}t^k
\sum_{f\in \ZZ^n \mid \max(f)\leq k} q^{k n-|f|}
p^{\ell_B(f)}a^{\nneg(f)}.
$$
Therefore
\begin{equation}\label{LHS}
\sum_{n\geq 0}\frac{u^n}{[\hat n]_{a,p}!(1-tq^n)}\sum_{f\in \ZZ^n}t^{\max(f)}q^{\max(f)\cdot n-|f|}
p^{\ell_B(f)}a^{\nneg(f)}=
\sum_{k\geq 0}t^k\prod_{j=0}^{k-1}e[q^ju]_{p}\cdot {\hat e}[q^ku]_{a,p}.
\end{equation}
\bigskip

On the other hand, from the bijection $\psi$, and Equations (\ref{eq:1}) and (\ref{eq:2}) in Proposition~\ref{p:psi}, it follows that 
\begin{align}
\sum_{f\in \ZZ^n \mid \pi(f)=\beta}
t^{\max(f)}q^{|f|}&=\sum_{\lambda}t^{\max(\lambda)+\des_B(\be)}q^{|\lambda|+n\des_B(\be)-\maj(\be)}\nonumber \\
&=\frac{t^{\des_B(\be)}q^{\sum_{i\in \Des_B(\be)}(n-i)}}{(tq;q)_n}. \label{seq}
\end{align}
Replacing $q$ by $q^{-1}$ and $t$ by $tq^n$ in (\ref{seq}) we get 
\begin{align}\label{caseB}
\sum_{f \in \ZZ^n \mid \pi(f)=\be} t^{\max(f)}q^{\max(f)\cdot n-|f|}
=\frac{t^{\des_B(\be)}q^{\maj(\be)}}{(t;q)_n}.
\end{align}
Hence
\begin{align}
\sum_{f\in \ZZ^n} t^{\max(f)}q^{\max(f)\cdot n-|f|}
p^{\ell_B(f)}a^{\nneg(f)} \nonumber
&=\sum_{\be \in B_n}p^{\ell_B(\be)}a^{\nneg(\be)}
\sum_{f \in \ZZ^n \mid \pi(f)=\be} t^{\max(f)}q^{\max(f)\cdot n-|f|}\\
&=\sum_{\be \in B_n}\frac{t^{\des_B(\be)}q^{\maj(\be)}p^{\ell_B(\be)}a^{\nneg(\be)}}{(t;q)_n}.\label{RHS}
\end{align}
By comparing (\ref{LHS}) and (\ref{RHS}) the result follows.
\end{proof}

\begin{remark}[Chow-Gessel's identities]
By letting $p=1$, and substituting $q\leftarrow q^2$, $a\leftarrow q$, and $u \leftarrow (1+q)u$ in \eqref{eq:main1} we obtain Chow-Gessel's formula \eqref{eq:cg-carlitzbis}. 
Letting $p=1$  and substituting $u$ by $(1+a)u$ in \eqref{eq:main1}, then 
extracting the coefficient of $u^n/n!$ 
yields another result of Chow-Gessel~\cite[Equation (26)]{cg07}
\begin{align}\label{eq:cg2}
\frac{\sum_{\be \in B_n}t^{\des_B(\be)}q^{\maj(\be)}a^{\nneg(\be)}}{(t;q)_{n+1}}
=\sum_{k\geq 0}([k+1]_q+a[k]_q)^nt^k.
\end{align}
This result also follows  from \eqref{eq:reiner1} and Proposition~\ref{p:connection}. Indeed, substituting $a\leftarrow aq^{-1}$, $p\leftarrow 1$ and $u\leftarrow (1+aq^{-1})u$ in 
\eqref{eq:reiner1} we get 
\begin{align}\label{eq:reiner3}
 \sum_{n\geq 0}\frac{ \sum_{\be\in B_n}t^{d(\be)}q^{\maj_R(\be)-
 \nneg(\beta)}a^{\nneg(\beta)}}{ (t;q)_{n+1} }\frac{u^n}{n!}
=\sum_{k\geq 0}t^k\exp\left([k+1]_q+a[k]_q\right)u.
\end{align}
In view of Proposition~\ref{p:connection},  Equation \eqref{eq:cg2}  follows then by extracting the coefficient of $u^n/n!$ in \eqref{eq:reiner3}.

Since the right-hand side of  \eqref{eq:cg2} is invariant by the substitution 
$q\to 1/q, a\to a/q$ and $t\to tq^n$ we  obtain immediately the following result
$$
\sum_{\be\in B_n}t^{\des_B(\be)} q^{\maj(\be)}a^{\nneg(\be)}=
\sum_{\be\in B_n}t^{\des_B(\be)} q^{n \des_B(\be)-\maj(\be)-\nneg(\be)}a^{\nneg(\be)}.
$$
\end{remark}

\begin{remark}[Reiner and Brenti's identites]
Comparing the right-hand sides of \eqref{eq:reiner1} and \eqref{eq:main1} we conclude that the distributions of the two quadruples of statistics are different. However,  if we set $q=1$ in \eqref{eq:main1} we obtain  
\begin{align}\label{e:tridistribution}
 \sum_{n\geq 0}\frac{u^n}{(1-t)^{n+1}}\frac{\sum_{\be \in B_n}t^{\des_B(\be)}p^{\ell_B(\be)}a^{\nneg(\be)}}{[\hat n]_{a,p}!}
=\frac{1}{1-te[u]_p}\cdot {\hat e}[u]_{a,p},
\end{align}
and comparing with Reiner's equation \eqref{eq:reiner1} with $q=1$ we see  that 
 $(d_{R}, \inv_{R}, \nneg)$ and $(\des_{B}, \ell_{B},\nneg)$ are equidistributed over $B_{n}$.
By letting $p=1$ in \eqref{e:tridistribution}   we recover a formula of Brenti~\cite[(14)]{fb94}:
\begin{align*}
\sum_{n\geq 0}\sum_{\be\in B_n}
t^{\des_B(\be)} a^{\nneg(\be)}   \frac{u^n}{n!}=\frac{(1-t)e^{u(1-t)}}{1-te^{(1+a)(1-t)u}}.
\end{align*}
\end{remark}

\begin{remark}[Gessel-Roselle identity for $B_n$]
To compute the generating function of major index and length we proceed as follows.
Setting $a=1$ in equation (\ref{eq:main1}) yields
\begin{equation}\label{maj-inv}
\sum_{n\geq 0}\frac{(1-p)^n u^n}{(t;q)_{n+1}(p^2;p^2)_n}\sum_{\be\in B_n}
t^{\des_B(\be)}q^{\maj(\be)}p^{\ell_B(\be)}=
\sum_{k\geq 0}t^ke[u]_pe[qu]_{p}\cdots e[q^{k-1}u]_{p}{\hat e}[q^ku]_{1,p}.
\end{equation}
By multiplying both sides of (\ref{maj-inv}) by $(1-t)$, and then by sending $t\rightarrow 1$ we obtain
\begin{eqnarray*}
\sum_{n\geq 0}\frac{((1-p)u)^n}{(q;q)_{n}(p^2;p^2)_n}\sum_{\be\in B_n}
q^{\maj(\be)}p^{\ell_B(\be)} & = &
\displaystyle{\prod_{i\geq 0}e[q^iu]_{p}}.
\end{eqnarray*}
Replacing  $u$ by $u/(1-p)$ and applying $q$-binomial formula 
$e[u/(1-p)]_p= \prod_{j \geq 0} \frac{1}{1-p^ju}$
we get the following  $B_n$-analogue  of an identity of Gessel-Roselle (see 
\cite[Theorem 8.5]{ge-thesis}  and the historical note  after Theorem 4.3 in  \cite{agr05})
\begin{align}\label{eq:Bge}
 \sum_{n\geq 0}\frac{\sum_{\be\in B_n}
q^{\maj(\be)}p^{\ell_B(\be)}}{(q;q)_{n} (p^2;p^2)_n} u^n=
\prod_{i,j\geq 0} \frac{1}{1-u p^i q^j}.
\end{align}
We refer the reader to \cite[Proposition 5.4]{bi08} for a different generalization of this identity.
\end{remark}
\section{The $D_n$ case.}

The aim of this section is to obtain a generating series for the four-variate distribution of descents, major index, length and number of negative entries over the group $D_n$ by using the encoding of Section~\ref{bijection1}.
We will see that this time we are not able to get a nice identity as in the $B_n$-case; this is not so surprising as we explain in Remark~\ref{r:final}.
\bigskip

Let $\ZZ^{n}_{e}$ be the set of sequences from $[n]$ to $\ZZ$ with an even number of negative entries.
For $f\in \ZZ^{n}_{e}$  Definition~\ref{d:statistics} is still valid and 
we let 
$$\ell_D(f):=\ell_D(\pi(f)),
$$
 where $\pi(f)\in D_{n}$ is the even signed permutation defined 
in Proposition~\ref{d:bijection}.

For  $\underline{n}=(n_0,n_1,\ldots,n_k)$ a composition of $n$, we
denote  $\ZZ^{n}_{e}(\underline{n})=\{f \in \ZZ^{n}_{e} \mid  \#\{i:|f_i|=j\}=n_j\}$.  We have the following lemma.

\begin{lemma}
Let $\underline{n}=(n_0,n_1,\ldots,n_k)$ be a composition of $n$. Then 
\begin{eqnarray*}
\sum_{f \in  \ZZ^{n}_{e}(\underline{n})} p^{\ell_D(f)}a^{\nneg(f)} 
&=& \frac{1}{2}{n \brack n_0,n_1,\ldots, n_k}_p \cdot \left\{\frac{(a;p)_n}{(a;p)_{n_0}}+\frac{(-a;p)_n}{(-a;p)_{n_0}}\right\}.
\end{eqnarray*}
\end{lemma}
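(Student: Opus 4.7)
The plan is to pass from the constrained set $\ZZ^n_e(\underline{n})$ to the full set $\ZZ^n(\underline{n})$ by a standard parity trick, so that the previously recalled identity \eqref{lemmaReiner} can be applied directly after a suitable substitution of the parameter $a$.

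First I would rewrite $\ell_D$ in terms of $\ell_B$. Since $\pi(f)\in D_n$ for every $f\in \ZZ^n_e(\underline{n})$, the relation $\ell_D(\g)=\ell_B(\g)-\nneg(\g)$ recalled in Section~2, together with the fact that the encoding $\psi$ of Section~\ref{bijection1} preserves the number of negative entries, yields $\ell_D(f)=\ell_B(f)-\nneg(f)$ on $\ZZ^n_e(\underline{n})$. Consequently
$$p^{\ell_D(f)}a^{\nneg(f)}=p^{\ell_B(f)}(a/p)^{\nneg(f)}.$$

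Next I would encode the even-parity condition on $\nneg(f)$ via the standard indicator $\tfrac12\bigl(1+(-1)^{\nneg(f)}\bigr)$, which transforms the sum over $\ZZ^n_e(\underline{n})$ into a half-sum over the unrestricted set $\ZZ^n(\underline{n})$ of two contributions, weighted respectively by $(a/p)^{\nneg(f)}$ and $(-a/p)^{\nneg(f)}$. Each of these is then evaluated by \eqref{lemmaReiner}: substituting $a/p$ for the parameter denoted $a$ in that identity turns $(-ap;p)_\bullet$ into $(-a;p)_\bullet$, while the substitution $-a/p$ produces $(a;p)_\bullet$. Adding the two outcomes and factoring out the common $p$-multinomial coefficient yields exactly the claimed expression.

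The only step requiring a bit of attention is the preliminary identification $\ell_D(f)=\ell_B(f)-\nneg(f)$, which relies on the fact that $\pi(f)$ preserves both the sign pattern and the length statistic when restricted to $\ZZ^n_e$; everything that follows is a routine parity decomposition together with a direct appeal to \eqref{lemmaReiner}.
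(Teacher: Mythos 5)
Your proposal is correct and follows essentially the same route as the paper: the parity indicator $\tfrac12\bigl(1+(-1)^{\nneg(f)}\bigr)$ to pass to the unrestricted set, the identity $\ell_D=\ell_B-\nneg$ to absorb the extra factor into the substitution $a\leftarrow a/p$ (resp.\ $-a/p$) in \eqref{lemmaReiner}, and summation of the two resulting terms. No gaps.
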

\begin{proof}
It is clear that 
$$ \sum_{f \in  \ZZ^{n}_{e}(\underline{n})} p^{\ell_D(f)}a^{\nneg(f)} = \frac{1}{2}
\left[\sum_{f \in  \ZZ^{n}(\underline{n})} p^{\ell_D(f)}a^{\nneg(f)} + 
       \sum_{f \in  \ZZ^{n}(\underline{n})} p^{\ell_D(f)}(-a)^{\nneg(f)}\right].$$
By definition we have that $\ell_B(\gamma)=\ell_D(\g)+\nneg(\g)$, so by \eqref{lemmaReiner} with $a \leftarrow a/p$ we get 
$$\sum_{f \in  \ZZ^{n}(\underline{n})} p^{\ell_D(f)}a^{\nneg(f)} ={n \brack n_0,n_1,\ldots, n_k}_p \cdot \frac{(-a;p)_n}{(-a;p)_{n_0}}.$$
The result follows by replacing $a$ by $-a$ in the above formula.
\end{proof}

In order to obtain our identity we want to compute the following generating series in two different ways 
$$
\sum_{f\in \ZZ^{n}_{e}}t^{ \max(f)} q^{\max(f)\cdot n-|f|}
p^{\ell_D(f)}a^{\nneg(f)}.
$$ 

First of all we have
\begin{align*}
&\sum_{f\in \ZZ^{n}_{e} \mid \max(f)\leq k} q^{k\cdot n-|f|}
p^{\ell_D(f)}a^{\nneg(f)} =\sum_{n_0+\cdots+n_k=n}q^{\sum_i n_i(k-i)}\sum_{f\in \ZZ^n_e} 
p^{\ell_D(f)}a^{\nneg(f)}\\
&=\frac{1}{2} \sum_{n_0+\cdots+n_k=n} \frac{[n]_p!}{[n_0]_p!\ldots [n_k]!_p} q^{\sum_i n_i(k-i)}\left\{
\frac{(a;p)_n}{(a;p)_{n_0}}+\frac{(-a;p)_{n}}{(-a,p)_{n_0}}\right\}\\
&=
\frac{1}{2} [n]_p! \langle{u^n}\rangle \left(e[u]_p e[uq]_p\cdots e[uq^{k-1}]_p\left\{(a;p)_n\hat{e}[uq^k]_{-a/p;p}+(-a;p)_n\hat{e}[uq^k]_{a/p;p}\right\}\right),
\end{align*}
where the notation $\langle{u^n}\rangle f(u)$ stands for the coefficient of $u^n$ in $f(u)$. Using the formula (\ref{tmax}) we derive immediately
$$
\sum_{f\in \ZZ^{n}_{e}}t^{ \max(f)} q^{\max(f)\cdot n-|f|}
p^{\ell_D(f)}a^{\nneg(f)}=(1-tq^n)\sum_{k\geq 0}t^k \sum_{f \in \ZZ^{n}_{e} \mid \max(f)\leq k} q^{k\cdot n-|f|} p^{\ell_D(f)}a^{\nneg(f)}.
$$
Therefore we obtain
\begin{align}
&\frac{\sum_{f\in \ZZ^{n}_{e}} t^{\max(f)}q^{\max(f)\cdot n-|f|}
p^{\ell_D(f)}a^{\nneg(f)}}{(1-tq^n)[n]_p! } \nonumber \\
&=\frac{1}{2} \sum_{k\geq 0} t^k \langle{u^n}\rangle \left\{e[u]_p e[uq]_p\cdots e[uq^{k-1}]_p\left(
(a;p)_n\hat{e}[uq^k]_{-a/p;p}+(-a;p)_n\hat{e}[uq^k]_{a/p;p}\right)\right\}. \label{LHSD}
\end{align}

On the other hand, we decompose $D_n$ as the disjoint union of the following three sets
\begin{align}
D_n^{+}&:=\{\g \in D_n \mid 0\not\in \Des_D(\g) \ {\rm and} \ \g(1)<0 \},\\
D_n^{-}&:=\{\g \in D_n \mid 0 \in \Des_D(\g) \ {\rm and} \ \g(1)>0 \},\\
D_n^{0}&:=D_n \setminus (D_n^{+} \cup D_n^{-}).
\end{align}

We have the following lemma.
\begin{lemma} \label{lem:d} For $\g \in D_{n}$ we have
$$\sum_{f\in \ZZ^{n}_e \mid \pi(f)=\g} t^{\max(f)} q^{\max(f) \cdot n-|f|}
=\frac{D_n^\gamma(t,q)}{(t;q)_n},$$
where
$$
D_n^\gamma(t,q):=\left\{\begin{array}{ll}
t^{\des_D(\g)+1}q^{\maj(\g)}, & \ {\rm if} \ \g \in D_n^{+} \\
t^{\des_D(\g)-1}q^{\maj(\g)}, & \ {\rm if} \ \g \in D_n^{-}\\ 
t^{\des_D(\g)}q^{\maj(\g)},   & \ {\rm if} \ \g \in D_n^{0}.
\end{array}
\right.
$$
\end{lemma}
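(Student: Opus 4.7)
The plan is to reduce the Lemma to the $B_n$-case formula \eqref{caseB} and then convert the $B$-statistics into $D$-statistics according to the three-way decomposition of $D_n$.

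The key observation is that the encoding of Definition~\ref{d:bijection} preserves signs componentwise, so $\nneg(f)=\nneg(\pi(f))$; in particular $f\in \ZZ^n_e$ if and only if $\pi(f)\in D_n$, and for any $\gamma\in D_n$ the fiber $\{f:\pi(f)=\gamma\}$ already lies inside $\ZZ^n_e$. Hence the sum in question coincides with the unrestricted $B_n$-sum, and applying \eqref{caseB} with $\be=\gamma$ gives
$$
\sum_{f\in \ZZ^{n}_e \mid \pi(f)=\gamma} t^{\max(f)} q^{\max(f)\cdot n-|f|}=\frac{t^{\des_B(\gamma)}q^{\maj(\gamma)}}{(t;q)_n}.
$$
It then remains to express $\des_B(\gamma)$ in terms of $\des_D(\gamma)$ on each of the three subsets. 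Note first that $\Des_B(\gamma)$ and $\Des_D(\gamma)$ are computed identically for indices $i\geq 1$ and can only differ at $i=0$, because of the conventions $\gamma(0)=0$ versus $\gamma(0)=-\gamma(2)$. The resulting criteria are $0\in \Des_B(\gamma)\iff \gamma(1)<0$ and $0\in \Des_D(\gamma)\iff \gamma(1)+\gamma(2)<0$. Since the index $0$ contributes nothing to the major index, the exponent $\maj(\gamma)$ is unaffected by this discrepancy and already matches what appears in $D_n^\gamma(t,q)$.

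The proof is then completed by a short case analysis. If $\gamma\in D_n^+$, then $\gamma(1)<0$ but $0\notin \Des_D(\gamma)$, so $\Des_B(\gamma)=\Des_D(\gamma)\sqcup\{0\}$ and $\des_B(\gamma)=\des_D(\gamma)+1$. If $\gamma\in D_n^-$, then $\gamma(1)>0$ while $0\in \Des_D(\gamma)$, so $\Des_D(\gamma)=\Des_B(\gamma)\sqcup\{0\}$ and $\des_B(\gamma)=\des_D(\gamma)-1$. Finally, if $\gamma\in D_n^0$, a brief check on the two possible signs of $\gamma(1)$ shows that the conditions $\gamma(1)<0$ and $0\in \Des_D(\gamma)$ must be simultaneously true or simultaneously false, so $\Des_B(\gamma)=\Des_D(\gamma)$ and $\des_B(\gamma)=\des_D(\gamma)$. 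Substituting these three identifications into the displayed formula recovers the three branches of $D_n^\gamma(t,q)$. No step presents a serious obstacle; the only point needing care is keeping the direction of the $\pm 1$ correction straight in the two boundary cases $D_n^\pm$.
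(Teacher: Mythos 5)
Your proof is correct and follows essentially the same route as the paper: identify the fiber over $\g\in D_n$ in $\ZZ^n_e$ with the full fiber in $\ZZ^n$ (since the encoding preserves signs), apply Equation~\eqref{caseB}, and then compare $\des_B(\g)$ with $\des_D(\g)$ via the behavior at the index $0$ in the three cases $D_n^{+}$, $D_n^{-}$, $D_n^{0}$. The only difference is that the paper writes out just the $D_n^{+}$ case and declares the others similar, whereas you carry out all three, including the verification that the two conditions agree on $D_n^{0}$.
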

\begin{proof}
The proof easily follows from Equation~(\ref{caseB}). Let show the first case. 
Suppose that $\pi(f)=\gamma\in D_{n}^{+}$, that is  $0 \not \in \Des_D(\g)$ and $\g(1)<0$. Then
\begin{eqnarray*}
\sum_{f\in \ZZ^{n}_e \mid \pi(f)=\g} t^{\max(f)} q^{\max(f) \cdot n-|f|}= \sum_{f\in \ZZ^{n} \mid \pi(f)=\g} t^{\max(f)} q^{\max(f) \cdot n-|f|}
& = & \frac{t^{\des_B(\gamma)}q^{\maj(\gamma)}}{(t;q)_n} \\
& = & \frac{t^{\des_D(\gamma)+1}q^{\maj(\gamma)}}{(t;q)_n}.
\end{eqnarray*}
In fact, $0 \in \Des_B(\g)$ since $\g(1)<0$, but $0 \not\in \Des_D(\g)$. The other cases are similar.
\end{proof}

For $\gamma\in D_n$  set
$$
w(\gamma):=t^{\des_D(\g)}q^{\maj(\g)}p^{\ell_D(\g)}a^{\nneg(\g)}.
$$
It follows from  Lemma~\ref{lem:d} that 
\begin{align*}
\sum_{f\in \ZZ^n_e} t^{\max(f)}q^{\max(f)\cdot n-|f|}
p^{\ell_D(f)}a^{\nneg(f)} &=\sum_{\g\in D_n}p^{\ell_D(\g)}a^{\nneg(\g)}
\sum_{f\in \ZZ^n_e \mid \pi(f)=\g} t^{\max(f)}q^{\max(f)\cdot n-|f|}  \\
&=\frac{1}{(t;q)_n}\left(\sum_{\g \in D_n^{0}}w(\gamma)+
\frac{1}{t} \sum_{\g \in D_n^{-}}w(\gamma)
+ t \sum_{\g \in D_n^{+}}w(\gamma)\right).
\end{align*}
\smallskip
By comparing the above equation with 
 Equation \eqref{LHSD}  we obtain the following identity.
\begin{theorem}\label{identityTD}
\begin{align}\label{identityD}
&\sum_{\g \in D_n^{0}}\frac{w(\gamma)}{(t;q)_n}+
\frac{1}{t} \sum_{\g \in D_n^{-}}\frac{w(\gamma)}{(t;q)_n}
+ t \sum_{\g \in D_n^{+}}\frac{w(\gamma)}{(t;q)_n} \nonumber \\
&=\frac{1}{2}\sum_{k\geq 0} t^k \langle{u^n}\rangle \left\{e[u]_p e[uq]_p\cdots e[uq^{k-1}]_p\left(
(a;p)_n\hat{e}[uq^k]_{-a/p;p}+(-a;p)_n\hat{e}[uq^k]_{a/p;p}\right)\right\}.
\end{align}
\end{theorem}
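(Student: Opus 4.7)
The plan is to compute the four-variable generating function
\[
S_n := \sum_{f\in \ZZ^{n}_{e}} t^{\max(f)}\, q^{n\max(f)-|f|}\, p^{\ell_D(f)}\, a^{\nneg(f)}
\]
in two different ways and equate the results. This is the $D_n$-analogue of the double counting used in the proof of Theorem~\ref{t:main1}, adapted to handle the parity constraint defining $\ZZ^n_e$ and the discrepancy between $\Des_B$ and $\Des_D$ at position $0$.

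For the first evaluation, I would group the sum defining $S_n$ by the composition $\underline{n}=(n_0,\ldots,n_k)$ recording the multiplicities $n_j=\#\{i:|f_i|=j\}$. The lemma preceding the theorem evaluates the inner sum $\sum_{f \in \ZZ^n_e(\underline{n})} p^{\ell_D(f)} a^{\nneg(f)}$ as the symmetrization of Reiner's formula at $a$ and at $-a$, which is precisely the projection $\tfrac12(1+(-1)^{\nneg})$ enforcing the parity condition on the number of negative entries. Summing over compositions with $\max(f) \leq k$, I recognize the result as a coefficient of $u^n$ in a product $e[u]_p e[qu]_p \cdots e[q^{k-1}u]_p$ times a half-sum of $\hat e$-type factors. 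Converting the cumulative $\max \leq k$ sum into the actual $\max = k$ distribution via the telescoping identity~(\ref{tmax}) yields equation~(\ref{LHSD}), which is the right-hand side of the theorem (up to the factor $(1-tq^n)[n]_p!$ that must be tracked through the normalizations).

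For the second evaluation, I would group the sum defining $S_n$ according to the even signed permutation $\gamma := \pi(f) \in D_n$. Since $\ell_D(f) = \ell_D(\gamma)$ and $\nneg(f) = \nneg(\gamma)$, the factors $p^{\ell_D}a^{\nneg}$ pull out and the remaining inner sum $\sum_{\pi(f)=\gamma} t^{\max(f)} q^{n\max(f)-|f|}$ is computed by Lemma~\ref{lem:d}. The key point is that $\Des_B(\gamma)$ and $\Des_D(\gamma)$ differ only at index~$0$: unwinding the conventions $\gamma(0):=-\gamma(2)$ versus $\beta(0):=0$, one verifies that $0 \in \Des_B \setminus \Des_D$ iff $\gamma \in D_n^+$, that $0 \in \Des_D \setminus \Des_B$ iff $\gamma \in D_n^-$, and that the two descent sets coincide iff $\gamma \in D_n^0$. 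This justifies the three-case output of Lemma~\ref{lem:d}, and after reinserting the $p^{\ell_D}a^{\nneg}$ factors and partitioning $D_n$ into the three pieces, I obtain the left-hand side of~(\ref{identityD}) written in terms of the weight $w(\gamma)$, with the prefactors $1$, $1/t$, $t$ absorbing the $\pm 1$ corrections to $\des_D$.

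The identity then follows by equating the two expressions for $S_n$. The main obstacle is the careful case analysis underlying the decomposition $D_n = D_n^+ \sqcup D_n^- \sqcup D_n^0$ together with the matching of Lemma~\ref{lem:d} to each piece; everything else is a direct transcription of the argument in Theorem~\ref{t:main1}, with the only genuinely new ingredient being the parity projection producing the half-sum of $(a;p)_n/(a;p)_{n_0}$ and $(-a;p)_n/(-a;p)_{n_0}$. Neither the three-piece splitting on the left nor the half-sum on the right collapses in the $D_n$ setting, which is why the identity is less elegant than the $B_n$ analogue~(\ref{eq:main1}).
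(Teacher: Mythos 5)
Your proposal is correct and follows essentially the same route as the paper: both compute $\sum_{f\in \ZZ^{n}_{e}} t^{\max(f)} q^{n\max(f)-|f|} p^{\ell_D(f)}a^{\nneg(f)}$ twice, once via the parity-symmetrized composition lemma and the telescoping identity \eqref{tmax} to reach \eqref{LHSD}, and once by fibering over $\gamma=\pi(f)$ with the splitting $D_n=D_n^{+}\sqcup D_n^{-}\sqcup D_n^{0}$ and Lemma~\ref{lem:d} accounting for the discrepancy between $\Des_B$ and $\Des_D$ at position $0$. Your identification of when $0$ lies in one descent set but not the other, and of the resulting $t^{\pm1}$ corrections, matches the paper's argument exactly.
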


If $a=1$  and letting $t\to 1$ in the above identity we obtain the $D_n$-analogue of
Gessel-Roselle's identity (see \eqref{eq:Bge}):
\begin{align}
1+\sum_{n\geq 1}u^n\frac{\sum_{\g\in D_n}q^{\maj(\g)}p^{\ell_D(\g)}}{(-p;p)_{n-1}(q;q)_{n-1}}=
\prod_{i,j\geq 0}\frac{1}{1-(1-p)up^iq^j}.
\end{align}
\smallskip

\section{Final Remarks}

\begin{remark}\label{r:final}
Note that Brenti~\cite[Proposition 4.3]{fb94} computed the generating function of $(\des_{D}, \nneg)$, and 
Reiner~\cite[Theorem 7]{re95} that of $(\des_{D}, \ell_{D})$ over $D_{n}$. Their formulas are much more involved than the corresponding $B_{n}$-cases, even in the one-variable case, see Equation~\eqref{e:brenti}. 
Reiner gave also a method to compute the  distribution of $(\des_{D}, \ell_{D}, \nneg)$ over $D_{n}$.
However, it doen's seem that his method allows to include the statistic major index in the computation.
\end{remark}

\begin{remark}
The restriction of the bijection $\psi$ (defined in Proposition~\ref{p:psi}) to $\ZZ^{n}_{e}$ is not a well defined map from $\ZZ^{n}_{e}$ to $D_{n}\times {\mathcal P}_{n}$. For example, consider $f=(0, -3, -4)\in \ZZ^{3}_{e}$. Then $\pi(f)=[1,-2,-3]\in D_{3}$, and $\la(f)=(-1, 1, 1)$ which is not a partition. Hence the following question naturally arises. 
\end{remark}

\begin{question} Is there a parametrization of the elements of $\ZZ_e^n$ which reduces the LHS of Equation \eqref{identityD} to  a single sum with a simpler RHS ?
\end{question}

The desired equation should coincide for $q=p=a=1$ with the following result of Brenti~\cite[Theorem 4.10]{fb94} for the Eulerian polynomials of type $D$:
\begin{equation}\label{e:brenti}
\frac{\sum_{\g \in D_n} t^{\des_D(\g)}}{(1-t)^{n+1}}=\sum_{k\geq 0}\{(2k+1)^n-n2^{n-1}[\mathcal{B}_n(k+1)-\mathcal{B}_n]\}t^k,
\end{equation}
where $\mathcal{B}_n(x)$ is the $n$th Bernoulli polynomial and $\mathcal{B}_n$ the $n$th Bernoulli number. See also \cite[\S 5]{cg07}. Recently Mendes and Remmel~\cite{mr08} computed some generating series over $B_n$ and $D_n$, closely related to ours. Unfortunately, their computations don't answer the above question.  

\begin{remark}
In a forthcoming paper~\cite{bz} we will study the distribution of several statistics over the wreath product of a
symmetric group with a cyclic group.  Among our results,  we will give an extension of Theorem~\ref{t:main1}.
\end{remark}


\end{document}